\newtheorem{theorem}{Theorem}[section]
\newtheorem{definition}{Definition}[section]
\newtheorem{lemma}[theorem]{Lemma}
\newtheorem{claim}[theorem]{Claim}
\newtheorem{corollary}[theorem]{Corollary}
\theoremstyle{remark}
\newtheorem*{remark}{Remark}
\title{Infinite Type D-modules and Higher Depth Mock Modular Forms I}
\author{EMILE BOUAZIZ}
\begin{document}
\maketitle

\begin{abstract} We study natural D-modules on the moduli stack, $\mathcal{M}_{ell}$, of elliptic curves over a field $k$ of characteristic $0$. We use this to produce an algebro-geometric version of the algebra of higher depth mock modular forms, studied from a physical perspective by numerous authors. \end{abstract}

\section{introduction} We construct a purely algebraic avatar of the algebra of (weak) \emph{higher depth mixed mock modular forms}, HDMF henceforth. These objects have attracted some attention recently as it has been observed that the $g=1$ partition functions of certain conformal field theories yield such functions, (\cite{DMZ},\cite{GMN}), in particular there is some physical speculation that the non-compact elliptic genus should be mock modular, see for example \cite{T}. A natural question, given that we know we can define the algebra of classical modular forms algebraically, is to what extent one can define HDMF algebraically. We remark that the usual definition is highly transcendental, and does not suggest a particularly obvious construction. 

We propose in this note to adopt a D-module theoretic perspective on the theory of modular forms. Vector bundles, such as the tensor powers of the Hodge bundle $\omega^{k}$ on $\mathcal{M}_{ell}$, give rise in a natural fashion to certain infinite dimensional D-modules via the \emph{jets} construction. It turns out that replacing $\omega^{k}$ with its jet D-module $\mathcal{J}\omega^{k}$, and thinking of modular forms as flat sections of the D-module $\mathcal{J}\omega^{k}$, allows for some extra flexibility. Crucial to this approach are the D-modules $\mathcal{V}^{k}$, defined as the symmetric powers of the first variation of Hodge structure of the universal elliptic curve over $\mathcal{M}_{ell}$. The algebra of algebraic (\emph{resp.} weak) HDMF, which we will denote $\mathbb{M}$, (\emph{resp.} $\mathbb{M}^{!}$), will be constructed as flat sections of D-modules inductively built from the $\mathcal{V}^{l}$ and $\mathcal{J}\omega^{k}$. The extra structure one expects, namely the so called \emph{shadow maps}, are constructed as a boundary map to a space computed (over $\mathbb{C}$) by the Eichler-Shimura isomorphism. 

A crucial point, which we now emphasize, is that the construction is suitably natural that it admits cuspidal growth conditions \emph{and} higher genera versions. We view these two points as sensible desiderata for an algebra containing the $g=1$ partition functions of suitable physical theories. 

A sketch of the theory in holomorphic and automorphic terms is also supplied, and we sketch the proofs of the comparison theorems needed. We need both a \emph{Betti to de Rham} comparison theorem and a comparison of the Betti construction with the physical incarnation of these objects. A proper account will be the subject of a sequel to this note.

\section{Acknowledgements} It was Ian Grojnowski who pointed out to me the lack of an algebro-geometric definition of mock modular forms and their variants, I thank him for this initial suggestion as well as numerous helpful conversations over the course of this work.

\section{Construction} \subsection{D-module Generalities} We assume a reasonable familiarity with the basic formalism of D-modules throughout, in particular we assume that the reader is familiar with the basic properties of the functor of derived flat sections $H^{*}_{\nabla}$. A reference for general D-module theory is \cite{Gi}. Our construction will make use of some functors relating D-modules and quasi-coherent sheaves, most significantly the jet functor $\mathcal{J}$, whose basic properties we review here, a more detailed treatment can be found in \cite{BK}. $X$ will denote a reasonable stack in what follows, $\mathbb{D}_{X}$ the category of left D-modules on $X$ and $QC(X)$ the category of quasi-coherent sheaves on $X$. \begin{definition} The functor $\mathcal{J}$, referred to as the functor of \emph{jets}, $\mathcal{J}:QC(X)\longrightarrow\mathbb{D}_{X},$ is the right adjoint to the natural functor $\mathbb{D}_{X}\longrightarrow QC(X)$.\end{definition}

There is a natural isomorphism $H^{*}_{\nabla}(X,\mathcal{J}E)\cong H^{*}(X,E)$ for any quasi-coherent sheaf $E$ on $X$. Further, there are natural maps $\mathcal{J}E\otimes\mathcal{J}F\longrightarrow\mathcal{J}(E\otimes F)$. These statements are immediate from the description of $\mathcal{J}$ as an adjoint, noting that the natural functor $\mathbb{D}_{X}\longrightarrow QC(X)$ is symmetric monoidal. \begin{remark} Recalling, for example from \cite{Lu}, the description of left D-modules as quasi-coherent sheaves on the de Rham stack, $X_{dR}$, $\mathcal{J}$ is best understood as the pushforward by the canonical map identifying nearby points, $X\rightarrow X_{dR}$. As the notation suggests, the quasi-coherent sheaf underlying the D-module $\mathcal{J}E$ is the sheaf of jets of sections of $E$. \end{remark} We will make use of another, simpler, functor as well, the functor heuristically associating to a D-module its maximal trivial sub-module, namely the functor $(-)^{fl}:\mathbb{D}_{X}\longrightarrow\mathbb{D}_{X}$  defined by $\mathcal{E}^{fl}:=H^{0}_{\nabla}(X,\mathcal{E})\otimes\mathcal{O}$.  \begin{remark} This is endowed with a natural map $\mathcal{E}^{fl}\longrightarrow\mathcal{E}$. If $X$ is a smooth variety over $\mathbb{C}$ and $\mathcal{E}$ is a vector bundle with regular singularities at $\infty$, then the Riemann-Hilbert correspondence (see \cite{Gi} for details) of Deligne encodes $\mathcal{E}$ as a representation $V$ of the topological fundamental group, $\Gamma$, of $X^{an}$. $\mathcal{E}^{fl}$ corresponds in this language to the trivial sub-object $V^{\Gamma}\longrightarrow V$. \end{remark}

\subsection{D-modules on $\mathcal{M}_{ell}$.} We will give a very brief recap of some D-modules on $\mathcal{M}_{ell}$ of interest to us. These D-modules will serve as building blocks in what follows. For a recap on the theory of the underlying quasi-coherent sheaves of interest we refer the reader to the excellent notes of R. Hain, \cite{H}.

We have natural line bundles on $\mathcal{M}_{ell}$, defined as tensor powers, $\omega^{\otimes k}$, of the Hodge bundle $\omega$, which we recall is the bundle with fibre $H^{0}(E,\Omega_{E})$ at the point $E$ of $\mathcal{M}_{ell}$. By above we obtain natural D-modules by applying the jets functor. We have also a natural rank two vector bundle with flat connection $\mathcal{V}$, which is the first variation of Hodge structure corresponding to the universal smooth elliptic curve over $\mathcal{M}_{ell}$. The fibre at $E$ is $H^{1}_{dR}(E,k)$. The underlying bundle is equipped with the \emph{Hodge filtration}, realizing it as an extension $$\omega\longrightarrow\mathcal{V}\longrightarrow\omega^{-1}.$$  The line bundles $\omega^{i}$ extend to line bundles on the compactified space $\overline{\mathcal{M}}_{ell}$, and thus so do their associated jet D-modules. The D-module $\mathcal{V}$ extends to a logarithmic D-module on $\overline{\mathcal{M}}_{ell}$, \cite{KM}.

We proceed now to the construction of the D-modules which will concern us. Note that by the above we obtain, crucially, a map of D-modules $\mathcal{V}^{k}\longrightarrow\mathcal{J}\omega^{\otimes -k}$. Note that by the functorialities of $\mathcal{J}$ discussed above we obtain further natural maps $\mathcal{V}^{l}\otimes\mathcal{J}\omega^{\otimes k}\longrightarrow\mathcal{J}\omega^{\otimes(k-l)}$, one can further use the functor $^{fl}$ to concoct more natural maps etc. 
\begin{remark} The above jet theoretic formalism allows for an elegant algebraic formulation of a lemma of Bol, we refer the reader to \cite{BH} for further details. We claim that there is a \emph{Bol exact sequence} of D-modules on $\mathcal{M}_{ell}$, $$0\longrightarrow\mathcal{V}^{k}\longrightarrow\mathcal{J}\omega^{-k}\longrightarrow\mathcal{J}\omega^{k+2}\longrightarrow 0.$$ This is not hard to prove. Taking sheaves of flat sections recovers the differential operator of Bol, $D_{Bol}:\omega^{-k}\longrightarrow\omega^{k+2}$. Further, one could apply the functor $H^{*}_{\nabla}$ to this exact sequence to obtain the familiar description of $H^{1}_{\nabla}(\mathcal{M}_{ell},\mathcal{V}^{k})$ as $M^{!}_{k+2}/ D_{Bol}M^{!}_{-k}$. One could also consider the associated exact sequence of log D-modules on the comapctification $\overline{\mathcal{M}_{ell}}$ and take flat sections. One would obtain in this manner a version of the Eichler-Shimura isomorphism. This description of the D-module $\mathcal{V}^{k}$ has the notable property that it generalises to higher genera.\end{remark}

\subsection{The Construction}

Given the above formalism, we are ready now to introduce the following, \begin{definition} For fixed $k$, we inductively define D-modules on $\mathcal{M}_{ell}$, denoted $Q^{i}_{k}$, as follows; $Q^{0}_{k}:=\mathcal{J}\omega^{\otimes k}$, and $$Q^{i+1}_{k}:=coker(\bigoplus_{l>0}\mathcal{V}^{l}\otimes (Q^{i}_{l+k})^{fl}\longrightarrow Q^{i}_{k}).$$ The corresponding logarithmic D-modules on $\overline{\mathcal{M}}_{ell}$ will be denoted $\overline{Q}^{i}_{k}$. We define $\mathbb{M}^{!,i}_{k}:=H^{0}_{\nabla}(\mathcal{M}_{ell},Q^{i}_{k})$ and $\mathbb{M}^{i}_{k}:=H^{0}_{\nabla}(\overline{\mathcal{M}}_{ell},\overline{Q}^{i}_{k})$.\end{definition} We note that there are of course natural surjections $Q^{i}_{k}\longrightarrow Q^{i+1}_{k}$, and similarly for $\overline{Q}$. In particular we obtain as well maps $\mathcal{V}^{k}\rightarrow Q^{i}_{-k}$ for $k\geq 0$. In what follows we will deal only with the D-modules on the open piece, $\mathcal{M}_{ell}$, of the moduli space of stable elliptic curves, extending the results to the cusp $\infty$ is easy.

\begin{lemma} There are natural maps $Q_{k}^{i,fl}\otimes Q_{l}^{j}\longrightarrow Q^{i+j}_{k+l}$ and $Q_{k}^{i,fl}\otimes Q_{l}^{j,fl}\longrightarrow Q^{i+j,fl}_{k+l}$, compatible with the maps $Q_{k}^{i,fl}\longrightarrow Q_{k}^{i}$. \end{lemma}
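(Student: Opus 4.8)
The plan is to construct the first family of maps and then obtain the second, together with the asserted compatibility, by a formal argument. The two inputs are, first, that the jet functor $\mathcal{J}$ is lax \emph{symmetric} monoidal, being right adjoint to the symmetric monoidal forgetful functor $\mathbb{D}_{X}\to QC(X)$; this supplies the multiplication $\mathcal{J}\omega^{k}\otimes\mathcal{J}\omega^{l}\to\mathcal{J}\omega^{k+l}$ and, via $\mathcal{V}^{m}\to\mathcal{J}\omega^{-m}$, the $\mathcal{V}$-action $\mathcal{V}^{m}\otimes\mathcal{J}\omega^{k}\to\mathcal{J}\omega^{k-m}$ used to build the $Q^{i}_{k}$. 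Second, the tensor product of two trivial D-modules is trivial, and any map $W\otimes\mathcal{O}\to\mathcal{G}$ out of a trivial module factors uniquely through $\mathcal{G}^{fl}\hookrightarrow\mathcal{G}$, since $\mathrm{Hom}(W\otimes\mathcal{O},\mathcal{G})=\mathrm{Hom}_{k}(W,H^{0}_{\nabla}(\mathcal{G}))=\mathrm{Hom}(W\otimes\mathcal{O},\mathcal{G}^{fl})$. Granting the first map $(Q^{i}_{k})^{fl}\otimes Q^{j}_{l}\to Q^{i+j}_{k+l}$, restrict it along $(Q^{j}_{l})^{fl}\hookrightarrow Q^{j}_{l}$: the source becomes trivial, so the map factors through $(Q^{i+j}_{k+l})^{fl}$. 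This is exactly the second map, and the asserted compatibility with $Q^{i,fl}_{k}\to Q^{i}_{k}$ is precisely this factorization. So the real content is the first family, and I will carry the second along as its flat refinement.

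I would construct the first map by induction on the level $j$ of the \emph{non-flattened} factor, not on the flattened level: incrementing the flattened level is obstructed, because $(-)^{fl}$ is only left exact and the surjection $Q^{i}_{k}\to Q^{i+1}_{k}$ need not split on flat sections. The base case $j=0$ uses the jet multiplication restricted to a flat first factor. For the step $j\to j+1$, present $Q^{j+1}_{l}=\mathrm{coker}(\bigoplus_{m>0}\mathcal{V}^{m}\otimes(Q^{j}_{m+l})^{fl}\to Q^{j}_{l})$, compose the inductively given $(Q^{i}_{k})^{fl}\otimes Q^{j}_{l}\to Q^{i+j}_{k+l}$ with the surjection $Q^{i+j}_{k+l}\to Q^{i+j+1}_{k+l}$, and check that the relation module is annihilated. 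The mechanism is the following \emph{absorption}: on a generator $(Q^{i}_{k})^{fl}\otimes\mathcal{V}^{m}\otimes(Q^{j}_{m+l})^{fl}$, commute the factors and apply the flat--flat product at level $(i,j)$, namely $(Q^{i}_{k})^{fl}\otimes(Q^{j}_{m+l})^{fl}\to(Q^{i+j}_{m+k+l})^{fl}$, which leaves $\mathcal{V}^{m}\otimes(Q^{i+j}_{m+k+l})^{fl}$; but this is precisely a generator of the relation module defining $Q^{i+j+1}_{k+l}$, hence maps to zero. Thus the product descends to level $j+1$. Flatness of the standing factor is exactly what keeps its product with the flat relation generator flat, so that the result has the shape $\mathcal{V}^{m}\otimes(\,\cdot\,)^{fl}$ matched by the target's relations.

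The base case $j=0$ must be established for \emph{all} $i$ before this induction runs, and I would do so by a secondary induction on $i$ using the same absorption mechanism, now with the flat relation generators of $Q^{i}_{k}$ absorbing the standing jets via the level-$0$ product. Care is required here precisely because, by left exactness of $(-)^{fl}$, a flat section of $Q^{i}_{k}$ need not lift to one of $Q^{i-1}_{k}$; accordingly the map must be produced from the universal property of the cokernel, as a morphism of D-modules, rather than by manipulating flat sections directly. Throughout, the flat--flat product (the second map) at level $(i,j)$ must be available before the step to $(i,j+1)$, which is why I build both families simultaneously, deducing the flat--flat product at each stage from the first map by the factorization of the first paragraph.

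The main obstacle I anticipate is the coherence underlying the descent in the inductive step: one must know that the product is compatible with the $\mathcal{V}^{m}$-structural maps and with \emph{both} cokernel presentations at once, so that the composite through the relations of $Q^{j+1}_{l}$ agrees with the composite through the relations of $Q^{i+j+1}_{k+l}$. This is an associativity statement combined with the absorption identity, and it is verified by repeated appeal to the lax symmetric monoidal structure of $\mathcal{J}$ and of $(-)^{fl}$. Once it is in place, associativity, commutativity and unitality of the resulting structure descend from the corresponding properties of the jet multiplication by the same argument, and the compatibility with $Q^{i,fl}_{k}\to Q^{i}_{k}$ is the factorization already recorded. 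The only genuinely delicate points are this coherence and the left-exactness of $(-)^{fl}$; everything else is a bookkeeping of the functorialities of $\mathcal{J}$.
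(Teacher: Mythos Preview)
Your proposal is correct and is in essence the paper's approach: an induction on the depth index using the cokernel presentations of the $Q^{i}_{k}$ and the lax monoidal structure of $\mathcal{J}$, with the flat--flat map obtained from the first by the factorization through $(-)^{fl}$. The paper's entire proof is the sentence ``an easy induction in $i$, the base case $i=0$ following from the properties of $\mathcal{J}$''; you have supplied the missing details. Your decision to run the main induction over the \emph{non-flattened} index $j$---on the grounds that $(-)^{fl}$ is only left exact, so $(Q^{i+1}_{k})^{fl}$ admits no convenient cokernel description---is a genuine clarification: taken literally, the paper's ``induction in $i$'' is awkward for exactly the reason you give, and its base case $i=0$ still requires an inner induction on $j$ to get $(\mathcal{J}\omega^{k})^{fl}\otimes Q^{j}_{l}\to Q^{j}_{k+l}$. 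The absorption argument you describe and the coherence you flag as the main obstacle are the actual content; the paper simply leaves them implicit.
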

\begin{proof} An easy induction in $i$, the base case $i=0$ following from the properties of $\mathcal{J}$. \end{proof}

\begin{lemma} The maps $\mathbb{M}^{i,!}_{k}\longrightarrow\mathbb{M}^{i+1,!}_{k}$ are injective, as are the maps $\mathbb{M}^{i}_{k}\rightarrow\mathbb{M}^{i+1}_{k}$. \end{lemma}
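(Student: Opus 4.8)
The plan is to reduce the statement to a vanishing property of the kernel of the defining presentation, and then to exploit the semisimplicity of the symmetric powers $\mathcal{V}^{l}$. Fix $i$. The cokernel presentation of $Q^{i+1}_{k}$ yields a short exact sequence of D-modules on $\mathcal{M}_{ell}$,
$$0\longrightarrow K\longrightarrow Q^{i}_{k}\longrightarrow Q^{i+1}_{k}\longrightarrow 0,\qquad K:=\operatorname{im}\Big(\bigoplus_{l>0}\mathcal{V}^{l}\otimes (Q^{i}_{l+k})^{fl}\longrightarrow Q^{i}_{k}\Big).$$
Since $H^{0}_{\nabla}$ is left exact, the kernel of $\mathbb{M}^{i,!}_{k}\to\mathbb{M}^{i+1,!}_{k}$ is exactly $H^{0}_{\nabla}(\mathcal{M}_{ell},K)$; the asserted injectivity is therefore equivalent to the vanishing of the flat sections of $K$.

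The key structural observation is that, by construction, the source of the presentation is assembled from the \emph{maximal trivial} submodules $(Q^{i}_{l+k})^{fl}=H^{0}_{\nabla}(Q^{i}_{l+k})\otimes\mathcal{O}$. Hence each summand $\mathcal{V}^{l}\otimes (Q^{i}_{l+k})^{fl}$ is a (possibly infinite) direct sum of copies of $\mathcal{V}^{l}=\operatorname{Sym}^{l}\mathcal{V}$; this is precisely where the functor $(-)^{fl}$ in the definition is essential, for had we tensored with all of $Q^{i}_{l+k}$ the semisimplicity used below would break down. I would then invoke the two standard facts about $\mathcal{V}^{l}$ for $l>0$: it is an irreducible flat connection, and it has no nonzero flat sections, $H^{0}_{\nabla}(\mathcal{M}_{ell},\mathcal{V}^{l})=0$. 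Both follow from the monodromy description: over $\mathbb{C}$ the connection $\mathcal{V}$ corresponds to the standard representation of $\pi_{1}(\mathcal{M}_{ell})=SL_{2}(\mathbb{Z})$, so $\mathcal{V}^{l}$ corresponds to $\operatorname{Sym}^{l}$ of the standard representation, which is irreducible by Zariski density of $SL_{2}(\mathbb{Z})$ in $SL_{2}$, and which admits no invariants since no nonzero vector is fixed by both generators $S$ and $T$.

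With these inputs the source D-module is semisimple with simple constituents among $\{\mathcal{V}^{l}:l>0\}$, and therefore so is its quotient $K$; concretely $K\cong\bigoplus_{l>0}(\mathcal{V}^{l})^{\oplus m_{l}}$. As $H^{0}_{\nabla}$ is additive and kills each $\mathcal{V}^{l}$, we obtain $H^{0}_{\nabla}(\mathcal{M}_{ell},K)=0$, which proves injectivity of $\mathbb{M}^{i,!}_{k}\to\mathbb{M}^{i+1,!}_{k}$. (Equivalently, semisimplicity splits the surjection of the source onto $K$, realising $K$ as a subobject of the source, whose flat sections already vanish by left exactness.)

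For the compactified maps I would reduce to the case just treated. Restriction to the dense open $\mathcal{M}_{ell}$ is exact, so the kernel $\overline{K}$ of $\overline{Q}^{i}_{k}\to\overline{Q}^{i+1}_{k}$ restricts to $K$; and since a flat section of the regular-singular log D-module $\overline{K}$ is determined by its restriction to $\mathcal{M}_{ell}$, we get $H^{0}_{\nabla}(\overline{\mathcal{M}}_{ell},\overline{K})\hookrightarrow H^{0}_{\nabla}(\mathcal{M}_{ell},K)=0$. Left exactness of $H^{0}_{\nabla}$ then yields injectivity of $\mathbb{M}^{i}_{k}\to\mathbb{M}^{i+1}_{k}$. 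The only non-formal ingredient, and hence the main obstacle, is the irreducibility and absence of flat sections of the $\mathcal{V}^{l}$; once these are in hand the remainder is bookkeeping with left-exact functors and semisimple constituents.
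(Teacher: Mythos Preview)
Your argument is correct and mirrors the paper's approach: both reduce injectivity to the vanishing of $H^{0}_{\nabla}$ on the image $K$ of $\bigoplus_{l>0}\mathcal{V}^{l}\otimes(Q^{i}_{l+k})^{fl}\to Q^{i}_{k}$, and both deduce this from irreducibility and absence of flat sections of the $\mathcal{V}^{l}$, established over $\mathbb{C}$ via the monodromy/Riemann--Hilbert description. The only cosmetic difference is that the paper packages the key step as a sub-lemma in the language of $\Gamma$-representations (any quotient of a direct sum of nontrivial irreducibles has no invariants, proved by a colimit reduction and finite induction), whereas you phrase it as semisimplicity of the source and hence of $K$; the paper also explicitly invokes the Lefschetz principle to pass from $\mathbb{C}$ to a general characteristic-zero field, which you leave implicit.
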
 \begin{proof}We prove this over $k=\mathbb{C}$, via complex analytic methods, and use an appropriate version of the Lefschetz principle to conclude for arbitrary $k$ of characteristic $0$. Over $\mathbb{C}$, the D-modules $\mathcal{V}^{k}$ correspond to representations of the modular group $\Gamma:=SL_{2}(\mathbb{Z})$. This is a consequence of the Riemann Hilbert correspondence of Deligne, and relies crucially on the fact that each $\mathcal{V}^{k}$ has regular singularities at the cusp $\infty$. 

More specifically, they are the restrictions of the natural polynomial representations, $V^{k}$, of $SL_{2}(\mathbb{C})$. They are irreducible, and for $k\neq 0$ we have $H^{0}(\Gamma,V^{k})=0$. We record now an essentially trivial sub-lemma.\begin{claim} Let $G$ be a group, possibly infinite, and $V_{i}$ irreducible, non-trivial finite dimensional representations, indexed by possibly infinite set $I$. Let $\pi:\bigoplus_{i\in I}V_{i}\longrightarrow Q$ be a quotient. Then $H^{0}(G,Q)=0$. \end{claim}\begin{proof}\emph{(Of claim.)} $Q$ is the colimit of the directed sytem of the images of $\pi$ applied to $\bigoplus_{i\in I^{!}\subset I}V_{i}$, over the set of finite subsets $I^{!}\subset I$. The functor $H^{0}(G,-)$ commutes with directed colimits and so without loss of generality we may assume that $I$ is finite. We now induct on the cardinality of $I$, the case of $I$ a singleton being trivial. At least one of the maps $V_{i}\rightarrow Q$ must be non-zero if $Q$ is non-zero. The map is thus an inclusion by irreducibility of $V_{i}$. The inductive hypothesis implies $H^{0}(G,Q/V_{i})=0$, and the result follows from the long exact sequence and vanishing of $H^{0}(G,V_{i})$.\end{proof}

We return now to the proof of the lemma.  Recall that the Riemann-Hilbert correspondence intertwines the functors $H^{*}(\Gamma,-)$ and $H^{*}_{\nabla}(\mathcal{M}_{ell},-)$. In the language of D-modules, the above claim thus means that $H^{0}_{\nabla}$ of \emph{any} quotient of  $\bigoplus_{l>0}\mathcal{V}^{l}\otimes (Q^{i}_{l+k})^{fl}$ must vanish, since the D-modules $Q^{i,fl}_{k}$ are by definition trivial. Taking $Q$ to be the quotient by the kernel of the map $\bigoplus_{l>0}\mathcal{V}^{l}\otimes (Q^{i}_{l+k})^{fl}\longrightarrow Q^{i}_{k}$, we conclude the lemma by the long exact sequence and the inductive definition of $Q^{i+1}_{k}$. \end{proof} 

\begin{theorem} Defining $\mathbb{M}^{!}_{k}:=colim_{i}\,\mathbb{M}^{i,!}_{k}$ and $\mathbb{M}:=\bigoplus_{k}\mathbb{M}^{!}_{k}$ we obtain a filtered graded commutative algebra, whose $0^{th}$ filtered piece is the algebra of weak modular forms $M_{*}^{!}$ We have also $\mathbb{M}:=\bigoplus_{k}colim_{i}\,\mathbb{M}^{i}_{k}$, which is a filtered graded commutative subablgebra of $\mathbb{M}^{!}$, with $0^{th}$ filtered piece the graded algebra of modular forms $M_{*}$.
\end{theorem}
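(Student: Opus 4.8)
The plan is to extract the entire algebra structure from the first Lemma, and the identification of the associated graded from the defining property of $\mathcal{J}$ together with the second Lemma. First I would reinterpret flat sections as morphisms. A class $f \in \mathbb{M}^{!,i}_{k} = H^{0}_{\nabla}(\mathcal{M}_{ell}, Q^{i}_{k})$ is the same datum as a map of D-modules $\mathcal{O} \to Q^{i}_{k}$; since $f$ is flat this factors canonically as $\mathcal{O} \to Q^{i,fl}_{k} \to Q^{i}_{k}$, using $Q^{i,fl}_{k} = H^{0}_{\nabla}(Q^{i}_{k}) \otimes \mathcal{O}$. Given a second class $g \in \mathbb{M}^{!,j}_{l}$, viewed as $\mathcal{O} \to Q^{j}_{l}$, I define the product $f \cdot g$ as the composite
$$\mathcal{O} \cong \mathcal{O} \otimes \mathcal{O} \xrightarrow{f \otimes g} Q^{i,fl}_{k} \otimes Q^{j}_{l} \longrightarrow Q^{i+j}_{k+l},$$
where the last arrow is the natural map of the first Lemma. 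As a composite of D-module maps out of $\mathcal{O}$ this is flat, hence defines an element of $\mathbb{M}^{!,i+j}_{k+l}$, and the pairing $\mathbb{M}^{!,i}_{k} \otimes \mathbb{M}^{!,j}_{l} \to \mathbb{M}^{!,i+j}_{k+l}$ is bilinear.

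Next I would verify the algebra axioms. Commutativity follows by comparing $f\cdot g$ with $g\cdot f$: since both $f$ and $g$ are flat, each product factors through the symmetric map $Q^{i,fl}_{k} \otimes Q^{j,fl}_{l} \to Q^{i+j,fl}_{k+l}$ of the first Lemma followed by $Q^{i+j,fl}_{k+l} \to Q^{i+j}_{k+l}$, so the claim reduces to commutativity of the underlying symmetric monoidal tensor product and of the base maps $\mathcal{J}E \otimes \mathcal{J}F \to \mathcal{J}(E\otimes F)$. Associativity is identical, resting on the coherence of these same structural maps across the inductive construction. The grading by weight $k$ is manifest, and multiplicativity of the filtration, $F^{i}\cdot F^{j} \subseteq F^{i+j}$, is precisely the bidegree behaviour recorded above. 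The compatibility clause of the first Lemma, together with the natural surjections $Q^{i}_{k} \to Q^{i+1}_{k}$, shows the pairing is compatible with the transition maps, so it descends to the colimit $\mathbb{M}^{!}_{k} = \mathrm{colim}_{i}\,\mathbb{M}^{i,!}_{k}$; the second Lemma guarantees the transition maps are injective, so the $\mathbb{M}^{i,!}_{k}$ form a genuine exhaustive increasing filtration of $\mathbb{M}^{!}_{k}$ by subspaces.

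To identify the zeroth filtered piece I would invoke the isomorphism $H^{*}_{\nabla}(X, \mathcal{J}E) \cong H^{*}(X, E)$: thus $\mathbb{M}^{!,0}_{k} = H^{0}_{\nabla}(\mathcal{M}_{ell}, \mathcal{J}\omega^{\otimes k}) \cong H^{0}(\mathcal{M}_{ell}, \omega^{\otimes k}) = M^{!}_{k}$, the weak modular forms of weight $k$. On the $i=0$ part the product above is induced by $\mathcal{J}\omega^{k} \otimes \mathcal{J}\omega^{l} \to \mathcal{J}\omega^{k+l}$, which under this isomorphism is exactly multiplication of sections of $\omega^{\bullet}$, i.e. the usual product of modular forms; hence $\bigoplus_{k} M^{!}_{k} = M^{!}_{*}$ is realized as the zeroth filtered piece as an algebra. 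The compactified statement is verbatim the same with $\mathcal{M}_{ell}, Q^{i}_{k}$ replaced by $\overline{\mathcal{M}}_{ell}, \overline{Q}^{i}_{k}$, yielding $\mathbb{M}$ with zeroth piece $H^{0}(\overline{\mathcal{M}}_{ell}, \omega^{\otimes k}) = M_{k}$. Finally, the subalgebra inclusion $\mathbb{M} \hookrightarrow \mathbb{M}^{!}$ is induced by restriction of the log D-modules $\overline{Q}^{i}_{k}$ to the open substack $\mathcal{M}_{ell}$: restriction is symmetric monoidal, compatible with all the structural maps and with the colimits, and injective on flat sections since $\mathcal{M}_{ell}$ is dense in $\overline{\mathcal{M}}_{ell}$.

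I expect the main obstacle to be the bookkeeping of the coherence isomorphisms: verifying that the associativity and commutativity constraints of the ambient symmetric monoidal category, and the structural maps $\mathcal{J}E \otimes \mathcal{J}F \to \mathcal{J}(E\otimes F)$, propagate coherently through the inductive cokernel definition of the $Q^{i}_{k}$, so that the product is well-defined independently of the order in which factors are flattened and is genuinely associative at every filtration level. Once this coherence is in place, descent to the colimit and the identification of the graded pieces are formal.
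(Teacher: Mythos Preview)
Your proposal is correct and follows essentially the same approach as the paper: the paper's proof is a terse three sentences invoking the two lemmas, the observation $H^{0}_{\nabla}(X,\mathcal{E}^{fl})\cong H^{0}_{\nabla}(X,\mathcal{E})$, and the adjunction $H^{0}_{\nabla}(X,\mathcal{J}(-))\cong H^{0}(X,-)$, and what you have written is a careful unpacking of precisely these ingredients. Your reformulation of flat sections as maps $\mathcal{O}\to Q^{i}_{k}$ factoring through $Q^{i,fl}_{k}$ is exactly the content of the $^{fl}$ observation, and your treatment of commutativity, associativity, and the inclusion $\mathbb{M}\hookrightarrow\mathbb{M}^{!}$ simply fills in details the paper leaves implicit.
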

\begin{proof} The structure of an algebra is immediate from the above lemmas, combined with the observation that $H^{0}_{\nabla}(X,\mathcal{E}^{fl})\cong H^{0}_{\nabla}(X,\mathcal{E})$. The filtration is induced by the colimit presentation, and the grading by the weight variable $k$. That the $0$th filtered pieces are as claimed is immediate from $H^{0}_{\nabla}(X,\mathcal{J}(-))\cong H^{0}(X,-)$. \end{proof}
\subsection{Eichler-Shimura and Shadows} 
There is one more piece of structure that we should here record, namely our construction of \emph{shadow maps}, see for instance \cite{DMZ} for the depth $1$ case. The existence and properties of these maps are very easy to establish, the target space can then be understood via the theorem of Eichler-Shimura, a more detailed account of which can be found in \cite{BH}.

We will write $S_{k}$ for the vector space of weight $k$ cusp forms. We recall the theorem of Eichler-Shimura, \begin{theorem} (\emph{Eichler-Shimura}.) There is an isomorphism of $\mathbb{C}$ vector spaces, $$H^{1}(\Gamma,V_{k})\cong M_{k+2}\oplus\overline{S_{k+2}}.$$\end{theorem}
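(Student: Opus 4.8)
The plan is to realize both sides of the isomorphism inside the de Rham cohomology of the local system $V_{k}$ on the open modular curve and then to split the resulting $H^{1}$ by Hodge type. First I would reduce the group cohomology $H^{1}(\Gamma,V_{k})$ to a sheaf cohomology: since $\Gamma$ acts on the upper half plane $\mathfrak{H}$ with finite stabilizers and $\mathfrak{H}$ is contractible, and since we work over $\mathbb{C}$ so that all torsion is invertible, there is a canonical identification $H^{*}(\Gamma,V_{k})\cong H^{*}(Y,\mathbb{V}_{k})$, where $Y:=\Gamma\backslash\mathfrak{H}$ and $\mathbb{V}_{k}$ is the local system underlying the restriction of $\mathcal{V}^{k}$ to $\mathcal{M}_{ell}^{an}$. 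This is precisely the identification already invoked in the proof of the injectivity lemma above. At the level of the de Rham presentation it gives $H^{1}(Y,\mathbb{V}_{k})\cong H^{1}_{\nabla}(\mathcal{M}_{ell},\mathcal{V}^{k})$, and the Bol exact sequence recorded earlier computes the latter as $M^{!}_{k+2}/D_{Bol}M^{!}_{-k}$. This is the \emph{weak}, purely de Rham, shadow of the statement; the content of Eichler-Shimura is to cut it down to the holomorphic growth conditions and to exhibit the Hodge splitting.

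Next I would produce the two summands explicitly. For the holomorphic side, the Bol sequence already realizes a weight $k+2$ form $f$ as a de Rham class, and restricting to forms with at worst polynomial growth at the cusp identifies $M_{k+2}$ with a subspace of $H^{1}$. Concretely, on $\mathfrak{H}$ this is the period cocycle given by the Eichler integral
$$\phi_{f}(\gamma)=\int_{z_{0}}^{\gamma z_{0}} f(\tau)(X-\tau Y)^{k}\,d\tau,$$
whose cocycle property is immediate from additivity of the integral together with the modular transformation law of $f$. For the antiholomorphic side one takes the complex-conjugate cocycle built from $\overline{g(\tau)}(X-\bar{\tau}Y)^{k}\,d\bar{\tau}$ for $g\in S_{k+2}$; cuspidality of $g$ is exactly what is needed for convergence and for the class to be well defined. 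The heart of the argument is then the claim that these two constructions together span $H^{1}(\Gamma,V_{k})$ and meet only in zero, which I would prove by passing to the harmonic representatives of $V_{k}$-valued one-forms on $Y$ and invoking the Hodge decomposition: every class has a unique harmonic representative, splitting into its $(1,0)$ and $(0,1)$ parts, and these parts are exactly the holomorphic and antiholomorphic period forms above.

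The hard part will be the Hodge-theoretic input, namely the degeneration of the Hodge-to-de Rham spectral sequence for $\mathbb{V}_{k}$ and, above all, the correct bookkeeping at the cusp that produces the asymmetry $M_{k+2}$ versus $\overline{S_{k+2}}$ rather than the naive symmetric answer $S_{k+2}\oplus\overline{S_{k+2}}$ of parabolic cohomology. The clean way to handle this is Zucker's $L^{2}$-cohomology, which computes the parabolic (interior) cohomology $H^{1}_{par}(\Gamma,V_{k})\cong S_{k+2}\oplus\overline{S_{k+2}}$ as a polarized Hodge structure; one then analyses the boundary map $H^{1}(\Gamma,V_{k})\to H^{1}(\Gamma_{\infty},V_{k})$ at the cusp and shows that its image is accounted for entirely by the Eisenstein series, so that the full cohomology adds a single copy of the Eisenstein quotient $M_{k+2}/S_{k+2}$ on the holomorphic side only. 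Injectivity of the period maps I would get for free from the polarization: the Petersson inner product is recovered as the cup product pairing on $H^{1}_{par}$, so a cusp form with vanishing periods is orthogonal to itself and hence zero. Assembling the parabolic isomorphism with the Eisenstein boundary contribution then yields the stated decomposition.
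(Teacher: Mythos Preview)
The paper does not prove this statement at all: it is stated with the preface ``We recall the theorem of Eichler--Shimura'' and is treated as a classical input, with a reference to \cite{BH} for further details. The only thing the paper actually proves in this vicinity is the subsequent corollary, which transports the statement from $H^{1}(\Gamma,V_{k})$ to $H^{1}_{\nabla}(\mathcal{M}_{ell},\mathcal{V}^{k})$ via Riemann--Hilbert and Grothendieck's comparison theorem. So there is no proof in the paper to compare your proposal against.

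That said, your outline is a perfectly standard route to the classical theorem: reduce to sheaf cohomology on the open curve, build the period cocycles via Eichler integrals for the $(1,0)$ part and their conjugates for the $(0,1)$ part, invoke Zucker's $L^{2}$ Hodge theory to get the parabolic splitting $S_{k+2}\oplus\overline{S_{k+2}}$, and then chase the boundary map at the cusp to recover the Eisenstein contribution on the holomorphic side only. This is essentially the approach one finds in the references the paper points to. If you want your write-up to sit naturally inside the paper, you could simply note that the theorem is classical and cite \cite{BH} or Zucker, since the paper itself does no more than that.
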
 

\begin{corollary}  There is an isomorphism of $\mathbb{C}$ vector spaces $$H^{1}(\mathcal{M}_{ell},\mathcal{V}^{k})\cong M_{k+2}\oplus\overline{S_{k+2}}.$$\end{corollary}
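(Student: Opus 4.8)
The plan is to deduce the statement directly from the stated Eichler--Shimura theorem by means of the comparison isomorphism $H^{*}_{\nabla}(\mathcal{M}_{ell},\mathcal{V}^{k})\cong H^{*}(\Gamma,V_{k})$ already invoked in the proof of the injectivity lemma; here I read $H^{1}(\mathcal{M}_{ell},\mathcal{V}^{k})$ as the de Rham (flat-section) cohomology $H^{1}_{\nabla}$. On this reading the corollary is simply a translation of Eichler--Shimura into D-module language, so essentially all of the genuine content is already contained in the theorem being quoted; the work is to make the intertwining precise.

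First I would establish the comparison isomorphism in three steps. The analytic uniformization gives $\mathcal{M}_{ell}^{an}\cong[\mathbb{H}/\Gamma]$, with $\Gamma=SL_{2}(\mathbb{Z})$ acting on the contractible upper half-plane $\mathbb{H}$; since $\mathbb{H}$ is contractible this exhibits $\mathcal{M}_{ell}^{an}$ as a $K(\Gamma,1)$ in the orbifold sense, so that the cohomology of the local system $V_{k}$ underlying $\mathcal{V}^{k}$ is computed by the group cohomology $H^{*}(\Gamma,V_{k})$. Second, because each $\mathcal{V}^{k}$ has regular singularities at the cusp $\infty$, Deligne's comparison theorem identifies the algebraic de Rham cohomology $H^{*}_{\nabla}(\mathcal{M}_{ell},\mathcal{V}^{k})$ with this analytic local-system cohomology. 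Combining the two yields $H^{*}_{\nabla}(\mathcal{M}_{ell},\mathcal{V}^{k})\cong H^{*}(\Gamma,V_{k})$ in every degree. Applying this in degree one and inserting the Eichler--Shimura identification $H^{1}(\Gamma,V_{k})\cong M_{k+2}\oplus\overline{S_{k+2}}$ then gives the corollary.

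I expect the only real subtlety to be bookkeeping around the stacky structure rather than any deep difficulty. The group $\Gamma$ has torsion, namely the order four and order six stabilizers at the elliptic points $i$ and $\rho$, so a priori the orbifold cohomology could differ from the cohomology of a coarse quotient; however, over $\mathbb{C}$ the higher cohomology of a finite group with coefficients in a $\mathbb{C}$-vector space vanishes, so these elliptic points contribute nothing and the $K(\Gamma,1)$ computation goes through unchanged. The regular-singularities hypothesis at the cusp, needed to apply Deligne's comparison, is exactly the input recorded earlier when the representations $V_{k}$ were introduced via Riemann--Hilbert, so no new verification is required. The hard part, such as it is, is merely to be precise that the two cohomology theories are intertwined compatibly with the natural structures; granting this, the corollary is immediate.
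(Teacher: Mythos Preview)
Your proposal is correct and matches the paper's own proof essentially line for line: Riemann--Hilbert (via the $K(\Gamma,1)$ description of $\mathcal{M}_{ell}^{an}$) identifies $H^{1}(\Gamma,V_{k})$ with the analytic cohomology, and the Grothendieck--Deligne comparison for connections with regular singularities at the cusp passes from analytic to algebraic, after which one quotes Eichler--Shimura. Your extra remark about torsion in $\Gamma$ and the vanishing of higher group cohomology over $\mathbb{C}$ is a welcome clarification of a point the paper leaves implicit.
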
\begin{proof} The representations $V^{k}$ correspond to local systems on $\mathcal{M}^{an}_{ell}$. The de Rham realisation of these local systems are the analytified D-modules $\mathcal{V}^{an}$. The Riemann-Hilbert correspondence implies an isomorphism $H^{1}(\Gamma,V_{k})\cong H^{1}(\mathcal{M}^{an}_{ell},\mathcal{V}^{k})$ and then an appropriate version of the comparison theorem of Grothendieck implies an isomorphism $ H^{1}(\mathcal{M}^{an}_{ell},\mathcal{V}^{an,k})\cong H^{1}(\mathcal{M}_{ell},\mathcal{V}^{k})$, because the D-module $\mathcal{V}$ has regular singularities at $\infty$. \end{proof}

We define now the maps of interest, we are back to working over an arbitrary characteristic $0$ field. \begin{lemma} There exist natural \emph{refined shadow maps}, $$\xi^{i+1}_{k}:\mathbb{M}^{i+1,!}_{k}\longrightarrow\bigoplus_{l>0}H^{1}_{\nabla}(\mathcal{M}_{ell},\mathcal{V}^{l})\otimes\mathbb{M}^{i,!}_{k+l},$$ and $$\xi^{i+1}_{k}:\mathbb{M}^{i+1}_{k}\longrightarrow\bigoplus_{l>0}H^{1}_{\nabla}(\mathcal{M}_{ell},\mathcal{V}^{l})\otimes\mathbb{M}^{i}_{k+l}.$$ \end{lemma}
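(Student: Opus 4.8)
The plan is to produce $\xi^{i+1}_{k}$ as the connecting homomorphism attached to the short exact sequence defining $Q^{i+1}_{k}$, followed by an identification of the target. Write $P^{i}_{k}:=\bigoplus_{l>0}\mathcal{V}^{l}\otimes(Q^{i}_{l+k})^{fl}$ and let $\phi\colon P^{i}_{k}\to Q^{i}_{k}$ be the structure map, with image $I^{i}_{k}:=\operatorname{im}(\phi)$, so that by definition there is a short exact sequence of D-modules $0\to I^{i}_{k}\to Q^{i}_{k}\to Q^{i+1}_{k}\to 0$. Applying $H^{*}_{\nabla}(\mathcal{M}_{ell},-)$ gives a connecting map $\delta\colon\mathbb{M}^{i+1,!}_{k}=H^{0}_{\nabla}(Q^{i+1}_{k})\to H^{1}_{\nabla}(I^{i}_{k})$. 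Since $I^{i}_{k}$ is a quotient of $P^{i}_{k}$, the Claim above gives $H^{0}_{\nabla}(I^{i}_{k})=0$; this both recovers the injectivity of the preceding lemma and shows that $\delta$ is injective on the new part $\mathbb{M}^{i+1,!}_{k}/\mathbb{M}^{i,!}_{k}$, which is the expected behaviour of a shadow.

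Next I would identify the target. Because flat sections commute with direct sums and with tensoring by a trivial multiplicity module, and because $H^{0}_{\nabla}(\mathcal{E}^{fl})\cong H^{0}_{\nabla}(\mathcal{E})$, one has a natural identification $H^{1}_{\nabla}(P^{i}_{k})\cong\bigoplus_{l>0}H^{1}_{\nabla}(\mathcal{M}_{ell},\mathcal{V}^{l})\otimes\mathbb{M}^{i,!}_{k+l}$, which is exactly the stated target. The surjection $P^{i}_{k}\to I^{i}_{k}$ induces $\psi\colon H^{1}_{\nabla}(P^{i}_{k})\to H^{1}_{\nabla}(I^{i}_{k})$, and the refined shadow map will be obtained by composing $\delta$ with a section of $\psi$.

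The two inputs needed for this last step are a vanishing statement and a splitting, both of which I would establish over $\mathbb{C}$ through the Riemann--Hilbert dictionary $H^{*}_{\nabla}(\mathcal{M}_{ell},-)\cong H^{*}(\Gamma,-)$ and then transport to an arbitrary characteristic-$0$ field by the Lefschetz principle, exactly as in the injectivity lemma. First, since $\Gamma=SL_{2}(\mathbb{Z})$ has $\operatorname{cd}_{\mathbb{Q}}(\Gamma)=1$, one has $H^{\geq 2}_{\nabla}=0$ on all the D-modules in play; applying this to $\ker\phi$ shows $\psi$ is surjective. Second, each $\mathcal{V}^{l}$ is an irreducible D-module and the $\mathcal{V}^{l}$ for distinct $l$ are non-isomorphic, so $P^{i}_{k}$ is semisimple and the submodule $\ker\phi=\bigoplus_{l}\mathcal{V}^{l}\otimes N_{l}$, with $N_{l}\subseteq\mathbb{M}^{i,!}_{l+k}$, splits off. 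This yields a decomposition $H^{1}_{\nabla}(P^{i}_{k})\cong H^{1}_{\nabla}(\ker\phi)\oplus H^{1}_{\nabla}(I^{i}_{k})$ and hence a section of $\psi$; setting $\xi^{i+1}_{k}:=(\text{section})\circ\delta$ produces the required map. Compatibility with the transition maps in $i$ and with the $(-)^{fl}$-structure of the previous lemma is then a diagram chase, and post-composing the $l$-component with the Eichler--Shimura isomorphism of the Corollary recovers the classical, cusp-form valued, shadows.

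The main obstacle is the \emph{naturality} of the section of $\psi$, equivalently the canonicity of the complement to $\ker\phi$ inside $P^{i}_{k}$: semisimplicity guarantees a complement but not a canonical one, so care is needed to pin down a functorial choice, for instance by fixing the splitting on the multiplicity spaces $N_{l}\subseteq\mathbb{M}^{i,!}_{l+k}$ once and for all, or by recording the map into the full sum only up to the image of $H^{1}_{\nabla}(\ker\phi)$. The secondary technical points, namely commutation of $H^{1}_{\nabla}$ with the infinite direct sum, the vanishing $H^{\geq 2}_{\nabla}=0$, and the descent of both facts to a general characteristic-$0$ field, are routine given the regular-singularities hypothesis and the Lefschetz argument already invoked. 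The compactified statement is identical, replacing $\mathcal{M}_{ell}$ by $\overline{\mathcal{M}}_{ell}$ and the $Q$'s by their logarithmic counterparts $\overline{Q}$, the only change being that the cohomological-dimension and Eichler--Shimura inputs are taken in the parabolic (cuspidal) setting.
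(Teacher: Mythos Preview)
Your proposal is correct and follows essentially the same route as the paper: the paper also takes the connecting homomorphism from the defining short exact sequence, invokes the splitting of $\ker\phi$ inside $P^{i}_{k}$ (attributed there to the irreducibility argument of the earlier Claim), composes with this splitting to land in $H^{1}_{\nabla}(P^{i}_{k})$, and then applies the identification $H^{1}_{\nabla}(\mathcal{V}^{l}\otimes Q^{i,fl}_{k+l})\cong H^{1}_{\nabla}(\mathcal{V}^{l})\otimes\mathbb{M}^{i,!}_{k+l}$. The paper does not address the canonicity-of-section concern you raise---it simply asserts the resulting map is natural---so your flagging of that point is in fact more careful than the original argument.
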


\begin{proof} First a technical point, the proof of claim 2.3. implies that the the inclusion of $ker(\bigoplus_{l>0}\mathcal{V}^{l}\otimes (Q^{i}_{l+k})^{fl}\longrightarrow Q^{i}_{k})$ into $Q^{i}_{k}$ is split, so we need not worry about injectivity. As such, we compose with the splitting  and define the refined shadow maps as the natural boundary maps on flat sections, coming from the definition of $Q^{i+1}_{k}$ as a quotient of $Q^{i}_{k}$. Indeed we obtain $$H^{0}_{\nabla}(\mathcal{M}_{ell},Q^{i+1}_{k})\longrightarrow H^{1}_{\nabla}(\mathcal{M}_{ell},\bigoplus_{l>0}\mathcal{V}^{l}\otimes Q^{i,fl}_{k+l}).$$ One now notes that we have an isomorphism $$H^{1}_{\nabla}(\mathcal{M}_{ell},\mathcal{V}^{l}\otimes Q^{i,fl}_{k+l})\cong H^{1}_{\nabla}(\mathcal{M}_{ell},\mathcal{V}^{k})\otimes\mathbb{M}^{i,!}_{k+l},$$ so that we are done.\end{proof}

Working now over $k=\mathbb{C}$, we use the Eichler-Shimura isomorphism and compose with the natural projection $H^{1}(\mathcal{M}_{ell},\mathcal{V}^{k})\longrightarrow\overline{S_{k+2}},$ to produce the \emph{shadow maps}.\begin{definition} Over $\mathbb{C}$, the \emph{shadow maps} are the maps $$\mathbb{M}_{k}^{i+1,!}\longrightarrow\bigoplus_{l>0}\overline{S_{l+2}}\otimes\mathbb{M}^{i,!}_{k+l},$$ and  $$\mathbb{M}_{k}^{i+1}\longrightarrow\bigoplus_{l>0}\overline{S_{l+2}}\otimes\mathbb{M}^{i}_{k+l}.$$\end{definition}

\begin{remark} These shadow maps allow us to put extra conditions on elements of $\mathbb{M}^{!}$. We can for example demand that the refined shadow lies in a particular summand. The case $l=-k$ corresponds to what have been called \emph{pure} mock modular forms of higher depth. We could also demand that the refined shadow map lie in the anti-holomorphic summand, in fact this demand is consistent with the classical/ physical definition. \end{remark}

\begin{definition} Over $\mathbb{C}$, the sub-algebra $\mathbb{M}^{an,!}\longrightarrow\mathbb{M}^{!}$ is defined by insisting that $f\in\mathbb{M}^{an,i,!}_{k}$ have purely anti-holomorphic refined shadow, ie $$\xi_{k}^{i}(f)\in H^{1}_{\nabla}(\mathcal{M}_{ell},\bigoplus_{l>0}\mathcal{V}^{l}\otimes Q^{i,fl}_{k+l})$$ lies in the anti-holomorphic summand furnished by the Eichler-Shimura isomorphism. We refer to this as the sub-algebra of \emph{analytic} (weak) HDMF. We define in a similar fashion $\mathbb{M}^{an}\longrightarrow\mathbb{M}$.\end{definition}

\begin{remark} A version of the Eichler-Shimura isomorphism over $k$ arbitrary of characteristic $0$ produces a canonical projection, $$H^{1}_{\nabla}(\mathcal{M}_{ell},\mathcal{V}^{k})\rightarrow S_{k+2}^{*},$$ so we could mimic the above definition without the $k=\mathbb{C}$ restriction if we chose, although we see no real reason to do so. Note that over $\mathbb{C}$ the Petersson inner product identifies $S_{k+2}^{*}\cong\overline{S_{k+2}}$, so the two definitions are indeed consistent. \end{remark}

The shadow maps let us determine the associated graded of $\mathbb{M}^{!}$ with respect to the depth filtration, which we will hereby denote $\mathcal{F}$. 

\begin{theorem} We have an isomorphism of bi-graded algebras $$Gr_{\mathcal{F}}\,\mathbb{M}^{!}\cong M^{!}_{*}\otimes Sym(\bigoplus_{*>0}H^{1}_{\nabla}(\mathcal{M}_{ell},\mathcal{V}^{*})),$$ where the second grading is the natural one inherited from $Sym$. \end{theorem}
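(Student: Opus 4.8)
The plan is to compute the associated graded by recognising the refined shadow maps $\xi$ as the homogeneous pieces of a single derivation, and then to match $Gr_{\mathcal{F}}\mathbb{M}^{!}$ against the free symmetric algebra through a left inverse built from iterated shadows together with an integration (generation) argument. Throughout write $W:=\bigoplus_{l>0}H^{1}_{\nabla}(\mathcal{M}_{ell},\mathcal{V}^{l})$ and let $I^{i}_{k}$ denote the image of the structure map $\bigoplus_{l>0}\mathcal{V}^{l}\otimes(Q^{i}_{l+k})^{fl}\to Q^{i}_{k}$, so that $Q^{i+1}_{k}=Q^{i}_{k}/I^{i}_{k}$. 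First I would establish two vanishing statements on the open stack. Since $\mathcal{M}_{ell}$ is cohomologically affine in characteristic $0$ we have $H^{\geq 1}(\mathcal{M}_{ell},\omega^{\otimes k})=0$, hence $H^{\geq 1}_{\nabla}(Q^{0}_{k})=0$; and since $\Gamma=SL_{2}(\mathbb{Z})$ has virtual cohomological dimension $1$, every summand of $\bigoplus_{l>0}\mathcal{V}^{l}\otimes(\text{trivial})$ has vanishing $H^{\geq 2}_{\nabla}$. An induction on $i$ along $0\to I^{i}_{k}\to Q^{i}_{k}\to Q^{i+1}_{k}\to 0$ then yields $H^{\geq 1}_{\nabla}(Q^{i}_{k})=0$ for all $i$. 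Feeding this back into the long exact sequence, and using $H^{0}_{\nabla}(I^{i}_{k})=0$ from Claim 2.3, collapses it to
$$0\longrightarrow\mathbb{M}^{i,!}_{k}\longrightarrow\mathbb{M}^{i+1,!}_{k}\stackrel{\partial}{\longrightarrow}H^{1}_{\nabla}(\mathcal{M}_{ell},I^{i}_{k})\longrightarrow 0,$$
whose boundary $\partial$ is, after the splitting $H^{1}_{\nabla}(I^{i}_{k})\hookrightarrow\bigoplus_{l>0}H^{1}_{\nabla}(\mathcal{V}^{l})\otimes\mathbb{M}^{i,!}_{l+k}$ furnished by semisimplicity, precisely the refined shadow $\xi^{i+1}_{k}$ of Lemma 2.7. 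Thus $\xi$ identifies $Gr^{i+1}_{\mathcal{F}}\mathbb{M}^{!}_{k}$ with $H^{1}_{\nabla}(I^{i}_{k})$, and in particular is injective on positive depth, recovering Lemma 2.2.

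Theorem 2.4 gives $Gr^{0}_{\mathcal{F}}=M^{!}_{*}$. The case $i=0$ identifies $Gr^{1}_{\mathcal{F}}=M^{!}_{*}\otimes W$, with $\xi^{1}$ the canonical swap isomorphism onto $W\otimes M^{!}_{*}$; here one uses that the structure map is injective on the isotypic components $\mathcal{V}^{l}$ with $H^{1}_{\nabla}(\mathcal{V}^{l})\neq 0$, which follows from the Bol inclusions $\mathcal{V}^{l}\hookrightarrow\mathcal{J}\omega^{-l}$ (tensored with the relevant flat sections, the odd $l$ being irrelevant as $H^{1}_{\nabla}(\mathcal{V}^{l})=0$ there). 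Next I would use the multiplicativity of Lemma 2.1 to check that the maps $\xi$ obey a Leibniz rule, so that the iterated refined shadow $\xi^{(n)}:\mathbb{M}^{n,!}_{k}\to W^{\otimes n}\otimes M^{!}_{*}$ is defined. Choosing a section $\widetilde{s}:W\to\mathbb{M}^{1,!}$ via the inverse of $\xi^{1}$ on $W\otimes 1$ and extending multiplicatively produces a graded algebra map $\pi:M^{!}_{*}\otimes Sym(W)\to Gr_{\mathcal{F}}\mathbb{M}^{!}$.

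The Leibniz rule, together with $\xi^{1}\widetilde{s}(w_{j})=w_{j}\otimes 1$ and the fact that $\xi$ annihilates $M^{!}_{*}=Gr^{0}_{\mathcal{F}}$ (so all lower-order terms die), computes $\xi^{(n)}$ on a product $\widetilde{s}(w_{1})\cdots\widetilde{s}(w_{n})$ to be $\sum_{\sigma\in S_{n}}w_{\sigma(1)}\otimes\cdots\otimes w_{\sigma(n)}$. Hence $\xi^{(n)}\circ\pi$ equals $n!$ times the symmetrisation $Sym^{n}W\hookrightarrow W^{\otimes n}$, which is injective in characteristic $0$; therefore $\pi$ is injective. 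It remains to prove that $\pi$ is surjective, i.e. that $Gr_{\mathcal{F}}\mathbb{M}^{!}$ is generated over $M^{!}_{*}$ in depth one, whereupon $\pi$ becomes an isomorphism of bigraded algebras matching the $Sym$-degree with the depth and the weights as stated.

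I expect this generation step to be the main obstacle, as it amounts to integrating the shadow: given $x\in Gr^{i+1}_{\mathcal{F}}$ one writes $\xi(x)\in W\otimes\mathbb{M}^{i,!}$, realises its leading term as the shadow of a product of depth-one elements using the inductive hypothesis, subtracts, and invokes injectivity of $\xi$ together with its strict compatibility with $\mathcal{F}$ (a consequence of the naturality of the connecting maps under the surjections $Q^{i}_{k}\to Q^{i+1}_{k}$) to conclude that $x$ itself is that product. The genuine difficulty is that this integration is obstructed unless the second iterated shadow $\xi^{(2)}:Gr^{i+1}_{\mathcal{F}}\to W\otimes W\otimes Gr^{i-1}_{\mathcal{F}}$ lands in the symmetric part $Sym^{2}W\otimes Gr^{i-1}_{\mathcal{F}}$. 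I anticipate that its antisymmetric part is a secondary, Massey-type iterated-extension class valued in an $H^{2}_{\nabla}(\mathcal{M}_{ell},-)$, which vanishes by virtual cohomological dimension one; granting this symmetry, the coordinate shadow derivations commute, the integration closes, $\pi$ is surjective, and the theorem follows.
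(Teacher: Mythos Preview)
Your cohomological core coincides with the paper's proof exactly: both establish $H^{1}_{\nabla}(\mathcal{M}_{ell},Q^{i}_{k})=0$ by induction on $i$, with the base case $H^{1}(\mathcal{M}_{ell},\omega^{\otimes k})=0$ drawn from affineness of the coarse space and the inductive step from $H^{2}_{\nabla}(\mathcal{M}_{ell},\mathcal{V}^{l})=0$ (virtual cohomological dimension one), and both feed this back into the long exact sequence for $0\to I^{i}_{k}\to Q^{i}_{k}\to Q^{i+1}_{k}\to 0$ to conclude that the refined shadow is surjective onto $H^{1}_{\nabla}(I^{i}_{k})$.

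From that point on the two approaches diverge. The paper simply asserts that the isomorphism ``will easily follow'' once right-exactness is known, and gives no further argument. You instead build an explicit algebra map $\pi:M^{!}_{*}\otimes Sym(W)\to Gr_{\mathcal{F}}\mathbb{M}^{!}$ from a section of $\xi^{1}$, verify its injectivity via the iterated shadow and a Leibniz rule, and isolate surjectivity (generation in depth one) as the genuine obstacle, proposing to discharge it by identifying the antisymmetric part of $\xi^{(2)}$ with a Massey-type class in an $H^{2}_{\nabla}$ that vanishes for dimension reasons. This is considerably more than the paper writes down, and your instinct that something is needed here is sound: reading the displayed short exact sequences naively as $Gr^{i+1}\cong W\otimes\mathcal{F}^{i}$ produces the wrong Hilbert series (tensor powers of $W$ rather than symmetric powers), so the passage from right-exactness to the $Sym$ description is not literally immediate. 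Your derivation-and-symmetry route is a plausible way to bridge that gap; the symmetry of $\xi^{(2)}$ and the strict compatibility of $\xi$ with the filtration are the points you leave heuristic, and the paper does not address them at all. In short, you reproduce the paper's vanishing input and then attempt to make precise what the paper leaves to ``easily follows''; the remaining soft spots in your sketch lie in exactly the same place as the paper's own ellipsis.
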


\begin{proof} We have exact sequences coming from the shadow maps $$0\longrightarrow\mathbb{M}^{i,!}_{k}\longrightarrow\mathbb{M}^{i+1,!}_{k}\longrightarrow\bigoplus_{l>0}H^{1}_{\nabla}(\mathcal{M}_{ell},\mathcal{V}^{k})\otimes\mathbb{M}^{i,!}_{k+l}$$ We wish to show that these are exact on the right as well, from which the theorem will easily follow. From the definition of the shadow maps as boundary maps for the functor $H_{\nabla}^{*}$, this claim will follow from the vanishing of the groups $H^{1}_{\nabla}(\mathcal{M}_{ell},Q^{i}_{k})$, which we prove as usual by induction on the depth $i$. The case of $i=0$ reads $H^{1}(\mathcal{M}_{ell},\mathcal{J}\omega^{k})=0$. This is equivalent simply to $H^{1}(\mathcal{M}_{ell},\omega^{k})=0$, which is clear from Serre vanishing and the affineness of (the coarse space of) $\mathcal{M}_{ell}$. The inductive step now follows from vanishing of $H^{2}_{\nabla}(\mathcal{M}_{ell},\mathcal{V}^{l})$, which is clear again because the coarse space of $\mathcal{M}_{ell}$ is affine and one dimensional.\end{proof}

\begin{corollary} Over $\mathbb{C}$ there is an isomorphism $$Gr_{\mathcal{F}}\,\mathbb{M}^{an,!}\cong M^{!}_{*}\otimes Sym(\overline{S_{2-*}}).$$\end{corollary}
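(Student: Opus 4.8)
The plan is to deduce this from the preceding theorem by tracking the Eichler--Shimura splitting through the inductive computation of the associated graded. Recall that over $\mathbb{C}$ the corollary to Eichler--Shimura gives $H^{1}_{\nabla}(\mathcal{M}_{ell},\mathcal{V}^{l})\cong M_{l+2}\oplus\overline{S_{l+2}}$, so that the previous theorem's identification $Gr_{\mathcal{F}}\,\mathbb{M}^{!}\cong M^{!}_{*}\otimes Sym(\bigoplus_{l>0}H^{1}_{\nabla}(\mathcal{M}_{ell},\mathcal{V}^{l}))$ factors as $M^{!}_{*}\otimes Sym(\bigoplus_{l>0}M_{l+2})\otimes Sym(\bigoplus_{l>0}\overline{S_{l+2}})$. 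Placing each generator coming from weight $l$ in weight $-l$, so that $\overline{S_{l+2}}=\overline{S_{2-(-l)}}$, the claimed answer $M^{!}_{*}\otimes Sym(\overline{S_{2-*}})$ is precisely the anti-holomorphic tensor factor. I would therefore aim to show that the induced filtration on the subalgebra $\mathbb{M}^{an,!}$ has associated graded equal to exactly this factor inside $Gr_{\mathcal{F}}\,\mathbb{M}^{!}$.

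The heart of the argument is a restricted version of the shadow exact sequence used in the previous theorem. Writing $\mathbb{M}^{an,i,!}_{k}:=\mathbb{M}^{an,!}_{k}\cap\mathbb{M}^{i,!}_{k}$ for the induced filtration, I would establish short exactness of
$$0\longrightarrow\mathbb{M}^{an,i,!}_{k}\longrightarrow\mathbb{M}^{an,i+1,!}_{k}\xrightarrow{\ \xi^{i+1}_{k}\ }\bigoplus_{l>0}\overline{S_{l+2}}\otimes\mathbb{M}^{an,i,!}_{k+l}\longrightarrow 0.$$
Left exactness and the identification of the kernel come by intersecting the short exact sequence of the previous theorem with $\mathbb{M}^{an,!}$. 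Right exactness is the crucial point, and here the definition of $\mathbb{M}^{an,!}$ does the work for us: given a target vector lying in $\bigoplus_{l>0}\overline{S_{l+2}}\otimes\mathbb{M}^{an,i,!}_{k+l}$, the surjectivity of the \emph{full} shadow map established in the previous theorem provides a lift $f\in\mathbb{M}^{i+1,!}_{k}$, and since $\xi^{i+1}_{k}(f)$ is then anti-holomorphic with analytic components, $f$ is automatically analytic, i.e. $f\in\mathbb{M}^{an,i+1,!}_{k}$. Thus the restricted shadow map is onto.

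With these sequences in hand I would run the same induction on the depth $i$ as before. The standard fact that an induced filtration yields an injection $Gr^{i}_{\mathcal{F}}\,\mathbb{M}^{an,!}\hookrightarrow Gr^{i}_{\mathcal{F}}\,\mathbb{M}^{!}$ guarantees that the associated graded is no larger than the anti-holomorphic sub-$Sym$; the short exact sequences above, together with the inductive hypothesis $Gr_{\mathcal{F}}\,\mathbb{M}^{an,!}_{k+l}\cong (M^{!}_{*}\otimes Sym(\overline{S_{2-*}}))_{k+l}$, identify $Gr^{i+1}_{\mathcal{F}}\,\mathbb{M}^{an,!}_{k}$ with $\bigoplus_{l>0}\overline{S_{l+2}}\otimes (M^{!}_{*}\otimes Sym(\overline{S_{2-*}}))_{k+l}$, which is exactly the $Sym$-degree $i+1$, weight $k$ piece of $M^{!}_{*}\otimes Sym(\overline{S_{2-*}})$. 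The base case $i=0$ is $\mathbb{M}^{an,0,!}_{*}=M^{!}_{*}$. The multiplicativity needed to upgrade this to an isomorphism of bigraded algebras is inherited from the fact that $\mathbb{M}^{an,!}$ is a subalgebra of $\mathbb{M}^{!}$ and that the shadow maps satisfy a Leibniz rule compatible with the $Sym$ coproduct, exactly as in the previous theorem.

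The main obstacle is not any single hard computation but rather the bookkeeping that pins the associated graded down from both sides: injectivity coming from the induced filtration shows it embeds in the anti-holomorphic factor, while the right-exactness of the restricted shadow shows it surjects onto it. The one genuinely delicate point is this right-exactness, and I would want to check carefully that the lifting argument respects the \emph{recursive} nature of the analyticity condition, namely that the analytic components of the target are simultaneously realised by a single analytic lift; but this follows formally once one organises the induction so that analyticity at depth $i+1$ is defined precisely by membership of $\xi^{i+1}_{k}(f)$ in the anti-holomorphic, depth-$i$-analytic target space.
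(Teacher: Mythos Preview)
Your argument is correct and is exactly the approach the paper intends: the paper's proof is the single line ``Immediate from the above [theorem] and the definition of $\mathbb{M}^{an,!}$,'' and your write-up simply unpacks that immediacy by running the restricted shadow exact sequence and inducting on depth. The only thing to note is that your recursive reading of the analyticity condition (shadow anti-holomorphic \emph{and} components analytic) is the one that makes $\mathbb{M}^{an,!}$ a genuine subalgebra with the stated associated graded, so it is the reading the paper has in mind.
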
\begin{proof} Immediate from the above lemma and the definition of $\mathbb{M}^{an,!}$. \end{proof}

 \subsection{Betti Version Sketch} We work here over $\mathbb{C}$, $\mathfrak{h}$ will denote the upper half plane and $\Gamma$ the modular group. We have an action of $\Gamma$ on $\mathcal{O}_{\mathfrak{h}}$, denoted $f\mapsto f\vert^{\gamma}_{k}$, whose fixed points are the weakly holomorphic modular forms of weight $k$. We have also the polynomial representations $V^{l}$, $l>0$, of $\Gamma$. We have inclusions $V^{l}\rightarrow (\mathcal{O}_{\mathfrak{h}},\vert^{\Gamma}_{-l})$, and maps $$(\mathcal{O}_{\mathfrak{h}},\vert^{\Gamma}_{l})\otimes(\mathcal{O}_{\mathfrak{h}},\vert^{\Gamma}_{k})\longrightarrow(\mathcal{O}_{\mathfrak{h}},\vert^{\Gamma}_{l+k}).$$  Notice the analogy with the D-module theoretic constructions above. $V^{k}$ is the Betti version of the de Rham object $\mathcal{V}^{an,k}$, under the RH-correspondence. The point is that infinite type D-module $\mathcal{J}\omega^{k}$ is supposed then to correspond to the infinite type $\Gamma$-representation $(\mathcal{O}_{\mathfrak{h}},\vert^{\Gamma}_{k})$. We can just mimic the above constructions in this Betti language now,

 \begin{definition} We define inductively define $\Gamma$-modules, $Q^{i}_{k,B}$, as follows, $Q^{0}_{k,B}:=(\mathcal{O}_{\mathfrak{h}},\vert^{\Gamma}_{k})$, and $$Q^{i+1}_{k,B}:=coker(\bigoplus_{l>0}V^{l}\otimes Q^{i,\Gamma}_{k+l,B})\longrightarrow Q^{i}_{k,B}.$$\end{definition}

Skipping through the various steps, all of which are in precise analogy with the D-module theoretic formalism above, we will eventually produce graded filtered algebras $\mathbb{M}_{B}\longrightarrow\mathbb{M}^{!}_{B}.$ Unfortunately, it would seem that the relevant cuspidal growth conditions must be inserted manually, and we will not deal with them here.

 The shadow maps allow us to define further the \emph{analytic} versions of these algebras. We sketch now the two comparison theorems of interest, details will appear in a sequel to the present note. \begin{theorem} (\emph{Betti-de Rham comparison.})  There are isomorphisms of algebras $\mathbb{M}^{!}_{B}\cong\mathbb{M}^{!}$, and $\mathbb{M}_{B}\cong\mathbb{M}$. \end{theorem}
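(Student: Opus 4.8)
The plan is to produce a single \emph{realization functor} $\rho$ from the relevant category of (possibly infinite type) D-modules on $\mathcal{M}^{an}_{ell}$ to $\Gamma$-modules, to check that it carries each de Rham building block to its Betti counterpart, and then to observe that both $Q^{i}_{k}$ and $Q^{i}_{k,B}$ are assembled from these building blocks by the \emph{same} universal recipe (tensor product, the functor $(-)^{fl}$, and cokernel). Since $\mathbb{M}^{!}$ and $\mathbb{M}^{!}_{B}$ are then obtained by applying $H^{0}_{\nabla}$ (resp. $H^{0}(\Gamma,-)$), taking a colimit in $i$ and a direct sum in $k$, matching the $Q$'s compatibly with cohomology and with the monoidal structure will force the algebra isomorphism. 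Concretely, $\rho$ should send a D-module to the sheaf of its analytic flat sections, pulled back to the universal cover $\mathfrak{h}\to\mathcal{M}^{an}_{ell}$ and regarded as a $\Gamma$-module. For the regular holonomic pieces $\mathcal{V}^{k}$ this is exactly the Riemann-Hilbert correspondence of Deligne already used above, giving $\rho(\mathcal{V}^{k})=V^{k}$.

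The crux of the base case is the identification $\rho(\mathcal{J}\omega^{k})\cong(\mathcal{O}_{\mathfrak{h}},\vert^{\Gamma}_{k})$. Here one cannot invoke the usual finite rank Riemann-Hilbert statement, since $\mathcal{J}\omega^{k}$ is of infinite type; instead I would argue directly from the adjunction defining $\mathcal{J}$. Over any analytic open the flat sections of $\mathcal{J}E$ coincide with the holomorphic sections of $E$, so the solution sheaf of $\mathcal{J}\omega^{k}$ on $\mathcal{M}^{an}_{ell}$ is the sheaf $\omega^{k,an}$ itself; trivializing $\omega$ on $\mathfrak{h}$ and recording the automorphy factor $(cz+d)^{k}$ identifies the associated $\Gamma$-module with $(\mathcal{O}_{\mathfrak{h}},\vert^{\Gamma}_{k})$, as desired. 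With the building blocks matched I would then verify the three functorial compatibilities: $\rho$ is symmetric monoidal and carries the jet multiplication maps to the weight addition maps $(\mathcal{O}_{\mathfrak{h}},\vert^{\Gamma}_{l})\otimes(\mathcal{O}_{\mathfrak{h}},\vert^{\Gamma}_{k})\to(\mathcal{O}_{\mathfrak{h}},\vert^{\Gamma}_{l+k})$; it intertwines $(-)^{fl}$ with $(-)^{\Gamma}$, which is precisely the content of the remark identifying $\mathcal{E}^{fl}$ with the trivial sub-object $V^{\Gamma}$; and it intertwines $H^{*}_{\nabla}(\mathcal{M}_{ell},-)$ with $H^{*}(\Gamma,-)$. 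The last point for the jet modules is a Cartan-Leray descent along $\mathfrak{h}\to\mathcal{M}_{ell}$ together with the coherent comparison $H^{*}(\Gamma,(\mathcal{O}_{\mathfrak{h}},\vert^{\Gamma}_{k}))\cong H^{*}(\mathcal{M}_{ell},\omega^{k})$, which holds since $\mathfrak{h}$ is Stein and contractible and the coarse space of $\mathcal{M}_{ell}$ is affine.

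Granting these compatibilities, the matching $\rho(Q^{i}_{k})\cong Q^{i}_{k,B}$ is an induction on $i$ identical in shape to the inductions in the lemmas above: the case $i=0$ is the base identification, and the inductive step follows because $\rho$ preserves the cokernel defining $Q^{i+1}_{k}$ and carries the structure map $\bigoplus_{l>0}\mathcal{V}^{l}\otimes(Q^{i}_{l+k})^{fl}\to Q^{i}_{k}$ to the corresponding Betti structure map. Applying $H^{0}$, passing to the colimit over $i$ and the sum over $k$, and using monoidality to identify the two products then yields the algebra isomorphism $\mathbb{M}^{!}_{B}\cong\mathbb{M}^{!}$. For the compactified statement $\mathbb{M}_{B}\cong\mathbb{M}$ the same scheme applies once one matches the log D-module $\overline{\mathcal{V}}$ and the log jet modules on $\overline{\mathcal{M}}_{ell}$ with the cuspidally constrained Betti objects; the relevant dictionary is Deligne's identification of the canonical logarithmic extension (regular singularities at the cusp) with the moderate growth condition, which is exactly the growth condition that must be imposed by hand on the Betti side.

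The main obstacle I anticipate is precisely the infinite type nature of the jet modules, which places the whole argument outside the comfortable regular holonomic setting where Riemann-Hilbert is an equivalence of well understood categories. Two points require genuine care: first, establishing that $\rho$ is exact enough to commute with the cokernels defining the $Q^{i+1}_{k}$, since taking flat sections is only left exact in general and one must exploit the splitting of the relevant kernels (noted in the proof of the refined shadow maps) to control the cokernel; and second, the cohomological comparison $H^{*}(\Gamma,-)\cong H^{*}_{\nabla}$ for these non-coherent, non-local-system objects, which mixes a coherent (GAGA style) comparison in the jet directions with a topological (local system) comparison in the $\mathcal{V}^{l}$ directions and must be shown to hold uniformly on the mixed objects $\mathcal{V}^{l}\otimes(Q^{i}_{l+k})^{fl}$. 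Controlling these two points rigorously is what the promised sequel will need to supply.
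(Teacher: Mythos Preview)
Your proposal is correct and follows essentially the same route as the paper's sketch: match the building blocks $\mathcal{V}^{k}\leftrightarrow V^{k}$ and $\mathcal{J}\omega^{k}\leftrightarrow(\mathcal{O}_{\mathfrak{h}},\vert^{\Gamma}_{k})$, check the monoidal and $(-)^{fl}\leftrightarrow(-)^{\Gamma}$ compatibilities, induct on $i$ to identify the $Q$'s, and use that $\mathfrak{h}$ is Stein and contractible to pass from de Rham cohomology to group cohomology. The two obstacles you flag at the end are exactly the points the paper isolates.

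The one packaging difference worth noting is that the paper does not try to build a single realization functor $\rho$ from algebraic D-modules to $\Gamma$-modules. Instead it factors the comparison in two steps: first an algebraic-to-analytic step, introducing an analytic jet functor $\mathcal{J}^{an}$ and analytic D-modules $Q^{i,an}_{k}$ defined by the same inductive recipe, and comparing $H^{*}_{\nabla}(\mathcal{M}_{ell},Q^{i}_{k})$ with $H^{*}_{\nabla}(\mathcal{M}^{an}_{ell},Q^{i,an}_{k})$ by induction using regular singularities of $\mathcal{V}^{k}$; then an analytic-to-Betti step. For the second step the paper introduces the ad hoc class of \emph{small} analytic D-modules (the extension-closed subcategory generated by flat vector bundles together with the essential image of $\mathcal{J}^{an}$), and asserts that on this class the comparison $H^{*}_{\nabla}\cong H^{*}(-,(\,)^{\nabla})$ continues to hold. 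This is precisely the device that handles your second obstacle about the cohomological comparison for mixed objects, and it sidesteps your first obstacle (exactness of $\rho$ on cokernels) because one never needs $\rho$ to be exact as a functor, only the cohomology groups to agree, which is again controlled inductively. Your direct-$\rho$ formulation is cleaner conceptually but would require you to supply the exactness argument you yourself flag; the paper's two-step factorization through the small category trades elegance for a slightly more hands-on induction that avoids that issue.
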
 \begin{proof} (\emph{Sketch}) There is an analytic version of the jets construction, which we denote $\mathcal{J}^{an}$. It takes a coherent sheaf on an analytic space to a D-module on the analytic space. Formalism implies for algebraic $X$, and a sheaf $\mathcal{E}$ on it, we have a map $(\mathcal{J}\mathcal{E})^{an}\rightarrow\mathcal{J}^{an}\mathcal{E}^{an}$. 

Now let us note that for a smooth complex analytic space, $X^{an}$, and a vector bundle with flat connection $\mathcal{V}$ on it, we have by the Riemann-Hilbert correspondence an isomorphism $$H^{*}_{\nabla}(X^{an},\mathcal{V})\cong H^{*}(X^{an},\mathcal{V}^{\nabla}),$$ where $\mathcal{V}^{\nabla}$ denotes the (locally constant) sheaf of flat sections on $\mathcal{V}$. This is completely false for arbitrary D-modules on $X^{an}$, however we note now that there is a larger class of D-modules for which it is true. Namely, the extension-closed subcategory of $\mathbb{D}_{X^{an}}$ generated by vector bundles with flat connection as well as by the essential image of the jets functor $\mathcal{J}^{an}$. We call this the category of \emph{small} D-modules.

Now, the formalism above allows us to define analytic versions of the D-modules $Q^{i}_{j}$ via formulas identical to those of definitions 3.5 and 3.2, we denote these $Q^{i,an}_{j}$. There are analytification morphisms $H^{*}_{\nabla}(\mathcal{M}_{ell},Q^{i}_{k})\rightarrow H^{*}_{\nabla}(\mathcal{M}_{ell}^{an},Q^{i,an})$ and one checks they are isomorphisms by an easy induction, using that $\mathcal{V}^{k}$ have regular singularities yet again. One simply notes now that the analytic D-modules $Q^{i,an}_{k}$ are small, and so their cohomology is computed as sheaf cohomology of their sheaves of flat sections. Considering sheaves of vector spaces on $\mathcal{M}_{ell}^{an}$ as $\Gamma$-equivariant such on $\mathfrak{h}$, we note now that since $\mathfrak{h}$ is Stein and contractible (so that coherent sheaves have no higher cohomology and there are only trivial local systems) we have $$H^{*}_{\nabla}(\mathcal{M}_{ell}^{an},Q^{i,an}_{k})\cong H^{*}(\Gamma, Q^{i}_{k,B}),$$ whence the claim is proven for $\mathbb{M}^{!}$. The claim for $\mathbb{M}$ follows identically.

 \end{proof}

There should also be a comparison with a physicical definition, for which we refer to \cite{GMN}. The algebra appearing in \emph{loc. cit.} will be denoted $\mathbb{M}^{phys}$.

\begin{claim} (\emph{Comparison with $\mathbb{R}$-analytic definition.}) There are isomorphisms $$\mathbb{M}^{an}\cong\mathbb{M}^{phys},\mathbb{M}^{an,!}\,\cong\mathbb{M}^{phys,!}.$$\end{claim}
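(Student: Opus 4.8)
(\emph{Sketch of a proposed argument.}) The plan is to transport everything to the Betti side, where the objects are concrete (real-analytic) functions on $\mathfrak{h}$, and there to identify the anti-holomorphic completions with the physical objects of \cite{GMN} by means of non-holomorphic Eichler integrals. First I would invoke the Betti--de Rham comparison just proved to replace $\mathbb{M}^{an,!}$ and $\mathbb{M}^{an}$ by their Betti incarnations, so that all the modules $Q^{i}_{k}$ become the $\Gamma$-modules $Q^{i}_{k,B}$ of functions on $\mathfrak{h}$ with slash action $\vert^{\Gamma}_{k}$, and the refined shadow maps $\xi^{i}_{k}$ become the connecting homomorphisms of the defining short exact sequences in $H^{*}(\Gamma,-)$. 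Under this dictionary the defining condition for $\mathbb{M}^{an,!}$ reads: the refined shadow of $f$ lands in the anti-holomorphic summand $\overline{S_{l+2}}\subset H^{1}(\Gamma,V^{l})$ furnished by Eichler--Shimura.

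The central analytic input is the non-holomorphic Eichler integral. For $g\in S_{l+2}$ the assignment $g^{*}(\tau):=\int_{-\overline{\tau}}^{i\infty}(z+\tau)^{l}\,\overline{g(-\overline{z})}\,dz$ (with a standard normalising constant) defines a real-analytic function on $\mathfrak{h}$ of weight $-l$ whose $\overline{\partial}$-derivative recovers $\overline{g}$ up to the expected power of $\mathrm{Im}\,\tau$, and whose failure to be invariant under $\vert^{\Gamma}_{-l}$ is precisely a cocycle representing the class of $\overline{g}$ in $H^{1}(\Gamma,V^{l})$. This is the analytic avatar of the anti-holomorphic summand in Eichler--Shimura, and it is exactly the device by which the physical definition completes a holomorphic object to a modular, real-analytic one.

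I would then build the comparison isomorphism by induction on the depth $i$. Given $f\in\mathbb{M}^{an,i+1,!}_{k}$ with refined shadow $\xi^{i+1}_{k}(f)=\sum_{l>0}[\overline{g_{l}}]\otimes f_{l}$, where $[\overline{g_{l}}]\in\overline{S_{l+2}}$ and $f_{l}\in\mathbb{M}^{an,i,!}_{k+l}$, I would form the completion $\widehat{f}:=f+\sum_{l>0}g_{l}^{*}\cdot\widehat{f_{l}}$, where $\widehat{f_{l}}$ is the real-analytic completion supplied inductively. The cocycle property of $g_{l}^{*}$ is arranged so that the modular anomaly of $f$ is cancelled by those of the correction terms, rendering $\widehat{f}$ genuinely modular and real-analytic of weight $k$; this is the physical object of \cite{GMN}, and $f\mapsto\widehat{f}$ is the proposed isomorphism. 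Conversely, from a physical HDMF one extracts the holomorphic part, reads off its iterated shadows, and verifies that these assemble a flat section of the appropriate $Q^{i}_{k,B}$ with anti-holomorphic refined shadow; functoriality of the connecting maps shows the two assignments are mutually inverse, and the same argument applies verbatim to $\mathbb{M}^{an}$ once the cusp conditions are matched.

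The step I expect to be the main obstacle is the reconciliation of normalisations and of the iterated-integral combinatorics of \cite{GMN} with the connecting homomorphisms of $H^{1}(\Gamma,-)$: one must check that the nested non-holomorphic Eichler integrals appearing in the physical completions reproduce exactly the boundary maps of the short exact sequences defining $Q^{i+1}_{k,B}$, together with convergence of these integrals and the correct behaviour at the cusp $\infty$, and that the multiplication maps $(\mathcal{O}_{\mathfrak{h}},\vert^{\Gamma}_{l})\otimes(\mathcal{O}_{\mathfrak{h}},\vert^{\Gamma}_{k})\to(\mathcal{O}_{\mathfrak{h}},\vert^{\Gamma}_{l+k})$ are intertwined with the product used in \emph{loc. cit.} Granting this bookkeeping, the depth filtration and the two gradings match term by term, and the claimed isomorphisms of algebras follow.
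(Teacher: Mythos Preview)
Your proposal is correct and follows essentially the same route as the paper's own sketch: reduce to the Betti side via the comparison theorem, invoke the non-holomorphic Eichler integral $g^{*}$ with its cocycle property, and run an induction on depth to build the completion. Your account is in fact somewhat more detailed than the paper's (you give the explicit integral, discuss the inverse map, and flag the combinatorial bookkeeping), but the architecture is the same.
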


\begin{proof} (\emph{Sketch}.) By the above it suffices to prove that we have isomorphisms $$\mathbb{M}_{B}^{an}\cong\mathbb{M}^{phys},\,\mathbb{M}_{B}^{an,!}\cong\mathbb{M}^{phys,!}.$$ Such are furnished by the so-called \emph{non-holomorphic Eichler integral} construction, which associates an $\mathbb{R}$-analytic function $g^{*}$ to a cusp form $g\in S_{k+2}$. The real analytic function $g^{*}$ satisfies $\overline{\partial}g^{*}=y^{k}\overline{g}$, where $y:=Im(\tau)$. Further, $g^{*}$  represents a polynomial cocycle for $\Gamma$ with repsect to the weight $-k$ action on $\mathbb{R}$-analytic functions.

We now argue by induction on the filtration degree $i$. If we have $f\in \mathbb{M}^{i}_{B,k}$ with shadow $\sum_{j}\overline{g_{i}}\otimes f_{j}$, for some cusp forms $g_{j}$ and elements $f_{j}\in\mathbb{M}^{i-i}_{B}$,  then one observes that $f-\sum_{j}g_{j}^{*}f_{j}$ is an $\mathbb{R}$ analytic modular form, and is easily seen to lie in $\mathbb{M}^{phys}$, by the inductive hypothesis. This provides the isomorphism claimed.\end{proof}

\subsection{Generalisation to Higher Genera, Future Directions} We work now with moduli stacks $\mathcal{M}_{g,n}$ of smooth marked curves of genus $g$. We have analogues of the Hodge bundle $\omega(g,n)$ (now of rank $g$) and the D-module $\mathcal{V}(g,n)$, now of rank $2g$. The map $\mathcal{V}^{k}\rightarrow\mathcal{J}\omega^{-k}$ still exists, as it is deduced formally from the Hodge filtration on $\mathcal{V}$ and the functorialities of $\mathcal{J}$. We obtain thus a family of D-modules $Q^{i}_{k}(g,n)$ defined inductively as in Def. 2.2. Modulo irreducibility of the local system corresponding to $\mathcal{V}(g,n)$, which is probably known to experts, we could then prove a version of Lemma 2.2, and so produce algebras $\mathbb{M}(g,n)\longrightarrow\mathbb{M}^{!}(g,n)$. Note that we believe the physics indicates that such should exist, as one expects partition functions to exist for arbitrary genera. Further, note that in depth $0$ such objects are fairly familiar, and are referred to as \emph{Teichmueller modular forms}, see for instance \cite{C}. We would also obtain shadows of such forms, by an appropriate version of the Eichler-Shimura isomorphism in higher genera.


\begin{thebibliography}{9}
\bibitem{BK}
R. Bezrukavnikov, D. Kaledin,
\textit{Fedosov quantization in algebraic context},
arXiv:math/0309290,
\bibitem{BH} 
F. Brown, R. Hain,
\textit{Algebraic De Rham Theory for Weakly Holomorphic Modular Forms of Level One},
arXiv:1708.00652v2,
\bibitem{C}
G. Codogni,
\textit{Vertex Algebras and Teichmueller Modular Forms},
arxiv:1901.03079,
\bibitem{DMZ}
A. Dabholkar, S. Murthy, D. Zagier,
\textit{Quantum Black Holes, Wall Crossing, and Mock Modular Forms},
arXiv:1208.4074,
\bibitem{Gi}
V. Ginzburg,
\textit{Lectures on D-modules},
Available at www.math.ubc.ca/~cautis/dmodules/ginzburg.pdf,
\bibitem{GMN}
R. K. Gupta, S. Murthy, C. Nazaroglu,
\textit{Squashed Toric Manifolds and Higher Depth Mock Modular Forms},
C. J. High Energ. Phys. (2019), 
\bibitem{H}
R. Hain,
\textit{Lecture Notes on Moduli of Elliptic Curves},
arXiv:0812.1803
\bibitem{KM}
N.M. Katz, B. Mazur,
\textit{Arithmetic Moduli of Elliptic Curves},
Princeton University Press, 1985,
\bibitem{Lu}
J. Lurie,
\textit{Notes on Crstals and Algebraic D-Modules},
Available on homepage of the author,
\bibitem{T}
J. Troost,
\textit{The non-compact elliptic genus: mock or modular},
JHEP 1006:104,2010

\end{thebibliography}
\end{document}